\title{Estimates of perturbation series for kernels
\footnotetext{
2010 MSC: 47A55, 60J35, 47D08. Keywords: forward kernel, Kato class, space-time.\\
The research was partially supported by grants MNiSW N N201 397137, MNiSW N N201 422539 and ANR-09-BLAN-0084-01.}}
\author{Krzysztof Bogdan
\footnote{Institute of Mathematics of the
Polish Academy of Sciences, ul. Śniadeckich 8, 00-956 Warszawa, Poland,
bogdan@pwr.wroc.pl}
 \and Tomasz Jakubowski
\footnote{Institute of Mathematics and Computer Science, Wroc{\l}aw University of Technology, Wybrze\.ze Wyspia\'nskiego 27, 50-370 Wroc{\l}aw, Poland,
Tomasz.Jakubowski@pwr.wroc.pl} \and Sebastian Sydor\footnote{Institute of Mathematics, University of Wroc{\l}aw, pl. Grunwaldzki 2/4, 50-384 Wroc{\l}aw, Poland, Sebastian.Sydor@math.uni.wroc.pl}}
\date{\today}
\newtheorem{theorem}{Theorem}
\newtheorem{lemma}{Lemma}
\newtheorem{definition}{Definition}
\newtheorem{remark}{Remark}
\newtheorem{example}{Example}
\def \si {\sigma }
\def \bbR {{\mathbb R}}
\def \bbN {{\mathbb N}}
\def \cM {{\cal M}}
\def \cB {{\cal B}}
\def \cE {{\cal E}}
\def \intl {\int}
\def \t {\tilde }
\def \qq {{q}}
\def \kK {{K}}
\def \kk {{k}}
\def \ka {{\kappa}}
\begin{document}

\maketitle

\begin{abstract}
For integral kernels on space-time we indicate a class of nonnegative Schr\"odinger perturbations which produce comparable integral kernels.
\end{abstract}
\section{Introduction}
Schr\"odinger operators $\Delta+q$ were studied for the Laplacian $\Delta$, e.g., in 
\cite{MR936811, MR1329992, MR1642818, MR1978999}.
Local integral smallness of the function $q$, defined as a Kato-type condition (\cite{MR1329992,MR1978999})
played an important role in these considerations.
Similar Schr\"odinger operators
based on the fractional Laplacian $\Delta^{\alpha/2}$ were studied in
\cite{MR1473631, MR1671973, MR1825645} 
(see also \cite{MR1920109}), with focus on {\it comparability}
of the resulting Green functions.
The corresponding estimates for general transition densities were then studied in \cite{MR2457489} under the following integrability condition on $q$,
\begin{equation}\label{eq1}
\int\limits^{t}_{s}\int\limits_X p(s,x,u,z) |q(u,z)| p(u,z,t,y) dzdu \leq [\eta+\beta(t-s)] p(s,x,t,y),
\end{equation}
where $p$ is a finite jointly measurable transition density, $\beta$ and $\eta$ are fixed nonnegative numbers, while times $s<t$ and states $x,y$ are arbitrary. Given (\ref{eq1}), the following estimate was obtained in \cite{MR2457489},
\begin{equation}\label{eq2}
\tilde p(s,x,t,y)\leq \frac{1}{1-\eta}\exp{\left(\frac{\beta}{1-\eta}(t-s)\right)}p(s,x,t,y),
\end{equation}
provided $\eta <1$. Here $\tilde p$ denotes the Schr\"odinger perturbation series defined by $p$ and $q$ (see below for details).
The approach of \cite{MR2457489} depends on nontrivial combinatorics of the perturbation series. Further combinatorial arguments were used  in \cite{MR2507445} to refine the above result by skipping the Chapman-Kolmogorov condition on $p$, relaxing the assumptions on $q$, and strengthening the estimate, as in (\ref{eq:metgKpt}) below. Meanwhile, a more straightforward method was proposed in \cite{MR2643799} for gradient perturbations of the transition density of the fractional Laplacian.
As suggested in \cite{MR2643799}, the technique extends to Schr\"odinger perturbations, and yields the main results of \cite{MR2507445}.
We present here the extension, which also allows to perturb Markovian semigroups, potential kernels, and in fact  general {\it forward} integral kernels on space-time by rather singular functions $q$.

We like to mention a related paper \cite{2011-BHJ} on the von Neumann series of general integral kernels with a certain transience-type property. Both papers were inspired by \cite{MR2457489,MR2507445}, but their methods and results are different. The present estimates are more convenient and specific
for forward kernels in {continuous time} perturbed by {functions}.

In what follows we will assume that $q$ is nonnegative, since the absolute value of the perturbation with signed $q$ is bounded by the perturbation with $|q|$, if finite.
In this connection we also note that a discussion of the {\it positive} lower bound for signed perturbations of transition densities is given in \cite{MR2457489}.

Our main results are given in Section~\ref{sec:ik}. Examples of applications and further comments are given in  Section~\ref{sec:ap}. 
In particular, we estimate the inverse kernel of Schr\"odinger perturbations of Weyl fractional derivatives on the real line.  
\section{Preliminaries}
We will recall, after \cite{MR939365}, basic properties of kernels.
\begin{definition}
Let $(E,\cE)$ be a measurable space. A kernel on $E$ is a map $K$ from $E\times\cE$ to $[0,\infty]$ with the following properties:
\begin{enumerate}
\item $x\mapsto K(x,A)$ is $\cE$-measurable for all $A\in\cE$,
\item $A\mapsto K(x,A)$ is countably additive for all $x\in E$.
\end{enumerate}
\end{definition}
\noindent Consider kernels $K$ and $L$ on $E$. The map
$$
(x,A)\mapsto \intl_E K(x,dy)L(y,A)
$$
from $(E\times\cE)$ to $[0,\infty]$ is a kernel on $E$, called the {\it composition} of $K$ and $L$, and denoted $KL$. Composition of kernels is associative (\cite{MR939365}).
We write $q\in \cE^+$ if $q:E\to [0,\infty]$ and $q$ is $\cE$-measurable.
We will denote by the same symbol the {\it kernel}
$q(x,A)=q(x)1_A(x)$. Here $1_A$ is the indicator function of $A$.
We let $\kK _n=(\kK q)^n\kK$, $n=0,1,\ldots$.
Associativity yields the following.
\begin{lemma}\label{l1}
$\kK _n=\kK _{n-1-m}\qq \kK _m$ for all $n\in\bbN$ and $m=0,1,\ldots,n-1$.
\end{lemma}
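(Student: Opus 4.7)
The plan is to unwind the definitions on both sides of the claimed identity and reduce the equality to a regrouping of factors that is legitimate by associativity of kernel composition.

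First I would expand the right-hand side using the definition $K_n = (Kq)^n K$. This gives
\[
K_{n-1-m}\,q\,K_m \;=\; \bigl[(Kq)^{n-1-m} K\bigr]\,q\,\bigl[(Kq)^{m} K\bigr],
\]
where the exponents make sense precisely because the hypothesis $m\in\{0,1,\dots,n-1\}$ guarantees that both $m\ge 0$ and $n-1-m\ge 0$.

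Next I would invoke associativity of kernel composition (cited from \cite{MR939365} in the paragraph preceding the statement) to drop the brackets and regroup the central block $K\,q$ as a single factor of $Kq$:
\[
(Kq)^{n-1-m}\,K\,q\,(Kq)^{m}\,K \;=\; (Kq)^{n-1-m}\,(Kq)\,(Kq)^{m}\,K \;=\; (Kq)^{n}\,K \;=\; K_n.
\]
This gives the desired identity.

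There is no real obstacle: the whole content of the lemma is the associativity of composition together with the bookkeeping that the two exponents $n-1-m$ and $m$, plus the inserted factor $Kq$, add up to $n$. The only thing to be slightly careful about is checking that $(Kq)^0 = I$ is interpreted correctly at the endpoints $m=0$ and $m=n-1$, so that $K_0 = K$ fits the pattern; this is immediate from the convention $K_0=(Kq)^0 K = K$ already implicit in the definition $K_n=(Kq)^n K$.
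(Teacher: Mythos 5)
Your proof is correct and is exactly the paper's argument: the paper justifies the lemma with the single remark that composition of kernels is associative, which is precisely the regrouping $(Kq)^{n-1-m}K\,q\,(Kq)^{m}K=(Kq)^{n}K$ that you spell out. Your extra check of the endpoint convention $K_0=(Kq)^0K=K$ is a harmless bit of added care.
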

We will consider the {\it perturbation} of ${K}$ by $q$, defined as the kernel
\begin{equation}\label{eq:deftk}
\t{K}=\sum_{n=0}^\infty \kK_n=\sum_{n=0}^\infty(Kq)^nK.
\end{equation}
Of course, $K\leq \t{K}$. In what follows we will prove upper bounds for $\t{K}$ under additional conditions on $K$ and $K_1=KqK$. 

\section{Estimates for kernels on space-time}\label{sec:ik}
In what follows we consider a set $X$ (the state space) with $\si$-algebra $\cM$, the real line  $\bbR$ (the time) equipped with the Borel sets $\cB_\bbR$, and $E=\bbR \times X$ (the space-time) with the product $\si$-algebra ${\cE}=\cB_{\bbR}\times \cM$.
We also fix  $q\in \cE^+$, a number $\eta \in [0,\infty)$ and a function $Q: \bbR \times \bbR \rightarrow [0, \infty)$ satisfying the following condition of {\it super-additivity}:
\begin{equation}\label{a2}
Q(u,r)+Q(r,v)\leq Q(u,v)\quad \mbox{for all }u<r<v.
\end{equation}
Let $\kK $
be a kernel on $E$. We will assume that $K$ is a {\it forward} kernel, i.e.
\begin{equation}\label{kernel}
\kK (s,x,A)=0\quad \mbox{provided } A\subseteq(-\infty,s]\times X \quad (A\in \cE,\; s\in \bbR).
\end{equation}
\begin{remark}
{\rm 
In the language of \cite{2011-BHJ}, $(s,\infty)\times X$ is {\it absorbing} for forward kernels.
}
\end{remark}
\noindent
We will also assume that
\begin{equation}\label{a1}
\kK \qq \kK (s,x,A)\leq \intl_{A}\kK (s,x,dtdy)\left[\eta+Q(s,t)\right],\quad 
(s,x)\in E, A\in \cE.
\end{equation}
From now on (\ref{a1}) and similar inequalities will be abbreviated as follows,
\begin{equation}\label{eq:aa1}
\kK_1(s,x,dtdy)\leq \kK (s,x,dtdy)\left[\eta+Q(s,t)\right].
\end{equation}
\begin{theorem}\label{t1}
For all $n=1,2,\ldots$, and $(s,x)\in E$,
\begin{align}
\kK _n(s,x,dtdy)&\leq \kK _{n-1}(s,x,dtdy)\left[\eta + \frac{Q(s,t)}{n}\right]\label{e1}\\
&\leq \kK (s,x,dtdy)\prod_{k=1}^{n}\left[\eta + \frac{Q(s,t)}{k}\right].\label{e2}
\end{align}
If $0<\eta <1$, then for all $(s,x)\in E$,
\begin{equation}\label{eq:metK}
\t{\kK} (s,x,dtdy)\leq \kK (s,x,dtdy){\left(\frac{1}{1-\eta}\right)}^{1+Q(s,t)/\eta}.
\end{equation}
If $\eta =0$, then for all $(s,x)\in E$,
\begin{equation}\label{e4}
\t{\kK}(s,x,dtdy)\leq\kK (s,x,dtdy) e^{Q(s,t)}.
\end{equation}
\end{theorem}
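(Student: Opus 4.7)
The plan is to derive (\ref{e1}) directly from (\ref{a1}) by an averaging argument, then iterate (\ref{e1}) to obtain (\ref{e2}), and finally sum the resulting series to deduce (\ref{eq:metK}) and (\ref{e4}). The case $n=1$ of (\ref{e1}) is exactly hypothesis (\ref{a1}), so I focus on $n\geq 2$.

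I would view $\kK_n(s,x,dtdy)$ as an iterated integral over $n$ intermediate space-time points with strictly increasing time coordinates $s=t_0<t_1<\cdots<t_n<t_{n+1}=t$. For each $i\in\{1,\ldots,n\}$, I apply (\ref{a1}) locally to the sub-composition in which $(t_i,x_i)$ plays the role of intermediate point, namely
\begin{equation*}
\int \kK(t_{i-1},x_{i-1},dt_idx_i)\,q(t_i,x_i)\,\kK(t_i,x_i,dt_{i+1}dx_{i+1}) \leq \kK(t_{i-1},x_{i-1},dt_{i+1}dx_{i+1})[\eta+Q(t_{i-1},t_{i+1})].
\end{equation*}
After this contraction and a relabeling of the surviving intermediate times as $s=\tau_0<\tau_1<\cdots<\tau_{n-1}<\tau_n=t$, the resulting upper bound on $\kK_n(s,x,dtdy)$ is precisely the $\kK_{n-1}$ integrand with a single extra weight $\eta+Q(\tau_{i-1},\tau_i)$ inserted at position $i$. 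Averaging these $n$ bounds gives
\begin{equation*}
n\,\kK_n(s,x,dtdy) \leq \int \kK(s,x,d\tau_1 dz_1)\cdots \kK(\tau_{n-1},z_{n-1},dtdy)\Bigl[n\eta + \sum_{i=1}^n Q(\tau_{i-1},\tau_i)\Bigr].
\end{equation*}
The crucial step is then to apply super-additivity (\ref{a2}) iteratively along the chain $\tau_0<\tau_1<\cdots<\tau_n$, which telescopes to $\sum_{i=1}^n Q(\tau_{i-1},\tau_i)\leq Q(\tau_0,\tau_n)=Q(s,t)$. The bracket becomes the $(s,t)$-dependent constant $n\eta+Q(s,t)$, factors out of the integral, and leaves exactly $\kK_{n-1}(s,x,dtdy)$; dividing by $n$ yields (\ref{e1}).

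Estimate (\ref{e2}) follows by iterating (\ref{e1}) down to $\kK_0=\kK$. For (\ref{eq:metK}) and (\ref{e4}) I would use (\ref{e2}) to reduce the estimation of $\tilde\kK=\sum_n \kK_n$ to summing $\sum_{n=0}^\infty \prod_{k=1}^n[\eta+Q(s,t)/k]$. When $\eta=0$ this is $\sum_n Q(s,t)^n/n!=e^{Q(s,t)}$, giving (\ref{e4}). When $0<\eta<1$ I would rewrite the $n$-th term as $\eta^n\binom{n+Q(s,t)/\eta}{n}$ and recognize the sum as the generalized binomial expansion of $(1-\eta)^{-(1+Q(s,t)/\eta)}$, giving (\ref{eq:metK}).

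The main obstacle is the bookkeeping in the contraction step: correctly tracking how the two times $t_{i-1},t_{i+1}$ surviving each local contraction become consecutive times $\tau_{i-1},\tau_i$ of the $\kK_{n-1}$ integrand, so that the sum of contributions aligns with consecutive gaps and super-additivity applies as a telescoping inequality. With that alignment in place, the rest is a routine combination of elementary series identities.
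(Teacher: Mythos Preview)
Your argument is correct. The route differs from the paper's in how (\ref{e1}) is obtained: you expand $K_n$ as a full iterated integral over $n$ intermediate points, apply (\ref{a1}) at each position in turn, average the resulting $n$ bounds, and then telescope the sum $\sum_i Q(\tau_{i-1},\tau_i)$ via super-additivity. The paper instead proceeds by induction on $n$: it writes $(n+1)K_{n+1}=n\,K_n qK + K_{n-1}qK_1$ using Lemma~\ref{l1}, applies the inductive hypothesis to the factor $K_n$ in the first summand and (\ref{a1}) to the factor $K_1$ in the second, and then invokes super-additivity once, $Q(s,u)+Q(u,t)\leq Q(s,t)$, rather than $n-1$ times. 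Both arguments encode the same telescoping mechanism; the paper's inductive version avoids writing out the $n$-fold integral and the relabeling bookkeeping you flag as the main obstacle, while your version makes the symmetry over the $n$ positions explicit. Your derivations of (\ref{e2}), (\ref{eq:metK}) and (\ref{e4}) coincide with the paper's. One minor slip: the displayed integrand after averaging is the $K_{n-1}$ integrand and should include the factors $q(\tau_j,z_j)$.
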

\begin{proof}
(\ref{a1}) gives (\ref{e1}) for $n=1$. By induction, Lemma \ref{l1}, (\ref{a1}) and (\ref{kernel}),
\begin{align*}
 (n+1)\kK _{n+1}(s,x,A)=&n\kK _n\qq \kK (s,x,A)+\kK _{n-1}\qq \kK _1(s,x,A)\\
\leq& n\intl_{E}\kK _{n-1}(s,x,dudz)\left[\eta + \frac{Q(s,u)}{n}\right]\qq (u,z)\kK (u,z,A)\\
+&\intl_A\intl_{E}\kK _{n-1}(s,x,dudz)\qq (u,z)\kK (u,z,dtdy)[\eta+Q(u,t)]\\
\leq&
\intl_A \kK _n(s,x,dtdy)\left[(n+1)\eta + Q(s,t)\right],
\end{align*}
as needed. (\ref{e2}) follows from (\ref{e1}), (\ref{e4}) results from Taylor\rq{}s expansion of the exponential function, and (\ref{eq:metK}) follows from the Taylor series
$$
(1-\eta)^{-a}=\sum_{n=0}^\infty \frac{\eta^n(a)_n}{n!},
$$
where $0<\eta<1$, $a\in \bbR$, and $(a)_n=a(a+1)\cdots(a+n-1)$.
\end{proof}

Theorem~\ref{t1} has two {\it fine} or {\it pointwise} variants, which we will state under suitable conditions.
We fix a (nonnegative) $\si$-finite, non-atomic measure $$dt=\mu(dt)$$ on $(\bbR,\cB_\bbR)$ and a function $\kk(s,x,t,A)$ defined for $s<t$, $x\in X$, $A \in \cM$, such that $(s,x,t)\mapsto \kk(s,x,t,A)\in [0,\infty)$ is jointly measurable. 
We will call $k$ a transition kernel if it satisfies the Chapman-Kolmogorov conditions, see (\ref{eq:CK}).
For instance, if $p$ is a transition probability, and we let $k(s,x,t,A)=p_{s,t}(x,A)$, then $k$ is a transition kernel, provided it is jointly measurable.
We let $\kk_0=\kk$, and for $n=1,2,\ldots$, we define
\begin{align*}
\kk_n(s,x,t,A)&=\int_s^t\int_X \kk_{n-1}(s,x,u,dz)q(u,z)\kk(u,z,t,A)du.
\end{align*}
\begin{lemma}\label{lem:knft} If $n\in\bbN$, $m=0,1,\ldots,n-1$, $s<t$, $x\in X$ and $A\in \cE$, then
\begin{equation}\label{eq:pnmA}
\kk _n(s,x,t,A)=\int_s^t \int_X \kk _{n-1-m}(s,x,u,dz)\qq(u,z) \kk _m(u,z,t,A)du.
\end{equation}
\end{lemma}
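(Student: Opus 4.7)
The plan is to prove the identity by strong induction on $n$, mirroring the argument for Lemma~\ref{l1} but with the time-parameter integration made explicit. The base case $n=1$ forces $m=0$, and the claim reduces to the recursive definition of $k_1$; at any level $n$ the case $m=0$ is likewise immediate since $k_0=k$.

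For the inductive step, fix $1\leq m\leq n-1$ and assume the identity for all smaller values of $n$. I would start from the defining recursion
\[
k_n(s,x,t,A) = \int_s^t \int_X k_{n-1}(s,x,r,dw)\, q(r,w)\, k(r,w,t,A)\, dr,
\]
and apply the inductive hypothesis at indices $(n-1,m-1)$ to rewrite $k_{n-1}(s,x,r,dw)$ as an integral over $s<u<r$ involving $k_{n-m-1}$, $q$, and $k_{m-1}$. Substituting this produces a quadruple integral over the region $\{(u,r):s<u<r<t\}\times X\times X$. The key step is then Tonelli's theorem: swap the order of the two time integrations, making $u$ the outer variable and letting $r$ range over $(u,t)$. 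The resulting inner integral
\[
\int_u^t \int_X k_{m-1}(u,z,r,dw)\, q(r,w)\, k(r,w,t,A)\, dr
\]
is exactly the defining expression for $k_m(u,z,t,A)$, and collecting the pieces yields the claim.

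The main obstacle is really only bookkeeping the measurability prerequisites for each application of Tonelli. One needs, in parallel with the main induction, that $(s,x,u)\mapsto k_n(s,x,u,A)$ is jointly measurable for every $A\in\mathcal{M}$; this propagates from the stated joint measurability of $k$ via standard results on compositions of transition kernels with measurable nonnegative integrands. Once the measurability is in place, nonnegativity makes the swap of integration order automatic, and the rest of the argument is purely algebraic.
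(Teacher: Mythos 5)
Your proof is correct and follows essentially the same route as the paper's: induction on $n$ with the case $m=0$ handled by the definition of $k_n$, then substitution of the inductive hypothesis at the shifted index $m-1$ into the defining recursion and an application of Fubini--Tonelli (justified by nonnegativity and joint measurability) to swap the two time integrations, recognizing the inner integral as $k_m(u,z,t,A)$. The paper phrases the step as passing from $n$ to $n+1$ rather than from $n-1$ to $n$, but the decomposition and the key swap are identical.
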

\begin{proof}
If $m=0$, then the equality (\ref{eq:pnmA}) holds by the definition of $k_n$. In
particular, this proves our claim for $n=1$. If $n\geq 1$ is
such that (\ref{eq:pnmA}) holds for all $m<n$, then so for every 
$m=1,2,\ldots,n$, we obtain
\begin{eqnarray*}
&&  k_{n+1}(s,x,t,A)=\intl_s^t \intl_ X k_{n}(s,x,u,dz) q(u,z) k(u,z,t,A) du \\
&=& \intl_s^t \intl_ X \intl_s^u \intl_X k_{n-1-(m-1)}(s,x,v,dz_1) q(v,z_1) k_{m-1}(v,z_1,u,dz)  dv \\
&&\times q(u,z)k(u,z,t,A) du\\
&=& \intl_s^t \intl_ X k_{n-m}(s,x,v,dz_1) q(v,z_1) \\
&&\times \left( \intl_v^t \int_ X k_{m-1}(v,z_1,u,dz) q(u,z) k(u,z,t,A) du \right) dv \\
&=& \intl_s^t \intl_ X k_{n-m}(s,x,v,dz_1) q(v,z_1) k_{m}(v,z_1,t,A)  dv.
\end{eqnarray*}
\end{proof}

We define
\begin{equation}\label{eq:defpttk}
\t{k}=\sum_{n=0}^\infty \kk_n.
\end{equation}
We will assume that for all $s \leq t \in \bbR$, $x\in X$ and $A\in \cM$, 
\begin{equation}\label{a3}
\intl_s^t\intl_X \kk(s,x,u,dz) q(u,z)\kk(u,z,t,A)du\leq [\eta+Q(s,t)]\kk(s,x,t,A),
\end{equation}
or $\kk_1(s,x,t,dy)\leq [\eta+Q(s,t)]\kk(s,x,t,dy)$.
Thus, (\ref{a3}) is a {\it fine} version of (\ref{a1}). %
\begin{theorem}\label{t2}
For all $n=1,2,\ldots$, $s<t$ and $x\in X$,
\begin{align}\label{eq:pte1}
\kk _n(s,x,t,dy)&\leq \kk _{n-1}(s,x,t,dy)\left[\eta + \frac{Q(s,t)}{n}\right],\\
\label{eq:pte2}
&\leq \kk (s,x,t,dy)\prod_{k=1}^{n}\left[\eta + \frac{Q(s,t)}{k}\right].
\end{align}
If $0<\eta <1$, then for all $s<t$ and $x\in X$,
\begin{equation}\label{eq:metKpt}
\t{\kk} (s,x,t,dy)\leq \kk (s,x,t,dy){\left(\frac{1}{1-\eta}\right)}^{1+Q(s,t)/\eta}.
\end{equation}
If $\eta =0$, then for all $s<t$ and $x\in X$,
\begin{equation}\label{eq:pte4}
\t{\kk}(s,x,t,dy)\leq \kk (s,x,t,dy)e^{Q(s,t)}.
\end{equation}
\end{theorem}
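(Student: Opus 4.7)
The plan is to mirror the proof of Theorem~\ref{t1} step by step, using Lemma~\ref{lem:knft} in place of Lemma~\ref{l1} and the fine assumption (\ref{a3}) in place of (\ref{a1}). The base case $n=1$ of (\ref{eq:pte1}) is precisely (\ref{a3}). For the inductive step I would invoke Lemma~\ref{lem:knft} with $m=0$ and $m=1$ to write
$$(n+1)\,\kk_{n+1}(s,x,t,A)=n\intl_s^t\!\intl_X \kk_n(s,x,u,dz)\qq(u,z)\kk(u,z,t,A)\,du+\intl_s^t\!\intl_X \kk_{n-1}(s,x,u,dz)\qq(u,z)\kk_1(u,z,t,A)\,du.$$
I would then apply the inductive hypothesis to the first integrand, bounding $\kk_n(s,x,u,dz)$ by $\kk_{n-1}(s,x,u,dz)[\eta+Q(s,u)/n]$, and (\ref{a3}) to the second, bounding $\kk_1(u,z,t,A)$ by $\kk(u,z,t,A)[\eta+Q(u,t)]$. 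The two expressions then share the common factor $\kk_{n-1}(s,x,u,dz)\qq(u,z)\kk(u,z,t,A)$ and combine into a single double integral carrying the bracket $(n+1)\eta+Q(s,u)+Q(u,t)$.

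By super-additivity (\ref{a2}) this bracket is dominated by $(n+1)\eta+Q(s,t)$, a quantity no longer depending on $u$, which therefore factors out of the double integral. What remains is exactly $\kk_n(s,x,t,A)$ by Lemma~\ref{lem:knft} with $m=0$, and dividing by $n+1$ produces (\ref{eq:pte1}) at level $n+1$. Iterating (\ref{eq:pte1}) immediately yields the product bound (\ref{eq:pte2}).

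Summation over $n\geq 0$ then finishes the proof along the lines of Theorem~\ref{t1}. For $\eta=0$ the product collapses to $Q(s,t)^n/n!$ and its series is $e^{Q(s,t)}$, giving (\ref{eq:pte4}). For $0<\eta<1$, setting $a=Q(s,t)/\eta$ one rewrites
$$\prod_{k=1}^{n}\left[\eta+\frac{Q(s,t)}{k}\right]=\frac{\eta^n\,(a+1)_n}{n!},$$
and the Pochhammer/Taylor identity $(1-\eta)^{-(a+1)}=\sum_{n=0}^\infty \eta^n(a+1)_n/n!$, already recalled in the proof of Theorem~\ref{t1}, yields (\ref{eq:metKpt}). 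The main (modest) obstacle is bookkeeping in the inductive step: the inductive hypothesis (\ref{eq:pte1}) controls the \emph{first} time argument of $\kk_n$ whereas (\ref{a3}) controls the \emph{second}, so the two bounds must be applied to opposite factors in the two summands, and super-additivity should be invoked only \emph{after} the two brackets have been added, so that the $u$-dependence actually cancels.
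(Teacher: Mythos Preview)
Your argument is correct and essentially identical to the paper's: the same splitting $(n+1)\kk_{n+1}=n\kk_n\qq\kk+\kk_{n-1}\qq\kk_1$ via Lemma~\ref{lem:knft}, the inductive bound applied to the first factor and (\ref{a3}) to the second, then super-additivity to collapse $Q(s,u)+Q(u,t)$, and the same Taylor/Pochhammer summation. Your write-up is in fact more explicit than the paper's, which compresses the inductive step into a single displayed chain and refers back to Theorem~\ref{t1} for the summation.
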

\begin{proof}
By Lemma~\ref{lem:knft}, induction, (\ref{a3}) and (\ref{a2}), for $n\geq 1$ we have
\begin{align*}
&(n+1)\kk_{n+1}(s,x,t,A)\\
&\leq n\intl_s^t\intl_X\kk_{n-1}(s,x,u,dz)\left[\eta + \frac{Q(s,u)}{n}\right]q(u,z)\kk(u,z,t,A)du\\
&+\intl_s^t\intl_X\kk_{n-1}(s,x,u,dz)q(u,z)\kk(u,z,t,A)\left[\eta + \frac{Q(u,t)}{n}\right]du\\
&=(n+1)\left[\eta + \frac{Q(s,t)}{n+1}\right]\kk_n(s,x,t,A),\quad A\in \cM.
\end{align*}
For $n=1$, (\ref{eq:pte1}) is identical with (\ref{a3}). We proceed as in Theorem~\ref{t1}.
\end{proof}

For the {\it finest} variant of Theorem~\ref{t1}, we fix a 
$\si$-finite measure 
$$dz=m(dz)$$ 
on $(X, \cM)$.
We will consider function $\ka(s,x,t,y)$ defined for $s<t$ and $x,y\in X$, such that $(s,x,t,y)\mapsto \ka(s,x,t,y)\in [0,\infty)$
is $\cB_{\bbR} \times \cM \times \cB_{\bbR}\times \cM$-measurable. 
We will call such $\ka$ a (forward) kernel density, because $\int_{\{(t,y)\in E: s<t\}} \ka(s,x,t,y)f(t,y)dtdy$ is a forward kernel on $E$.
For instance, we may take $k(s,x,t,y)=p_{s,t}(x,y)$, if measurable and finite, where $p$ is a transition probability density function.
We define $\ka_0(s,x,t,y) = \ka(s,x,t,y)$, 
$$
\ka_n(s,x,t,y) = \intl_s^t \intl_X \ka_{n-1}(s,x,u,z)q(u,z)\ka(u,z,t,y) \,dz\,du\,,\quad n=1,2,\ldots\,.
$$

\begin{lemma}\label{lem:pnm}
For all $n=1,2,\ldots$, $m=0,1,\ldots,n-1$, $s,t\in \bbR$ and $x,y \in  X$, 
\begin{equation} \label{eq:pnm}
 \ka_n(s,x,t,y)  =  \intl_s^t\intl_{X} \ka_{n-1-m}(s,x,u,z) q(u,z) \ka_{m}(u,z,t,y) dzdu.
\end{equation}
\end{lemma}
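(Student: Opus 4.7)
The plan is to mirror the proof of Lemma \ref{lem:knft} almost verbatim, since the only difference is that we are working with kernel densities $\ka(s,x,t,y)$ rather than kernels $k(s,x,t,A)$, and all integrands are nonnegative, so Tonelli's theorem handles any needed interchange of integrals.

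First I would proceed by induction on $n$. The base case $n=1$ forces $m=0$, in which case \eqref{eq:pnm} reduces to the definition of $\ka_1$. For the inductive step, assume that \eqref{eq:pnm} holds for some $n \geq 1$ and all $m = 0, 1, \ldots, n-1$. The case $m=0$ for $\ka_{n+1}$ is again immediate from the definition. For $m \geq 1$, I would start from
\begin{equation*}
\ka_{n+1}(s,x,t,y) = \intl_s^t \intl_X \ka_n(s,x,u,z) q(u,z) \ka(u,z,t,y)\, dz\, du,
\end{equation*}
apply the inductive hypothesis to $\ka_n(s,x,u,z)$ with index $m-1$ (so that $n-1-(m-1) = n-m$), which gives
\begin{equation*}
\ka_n(s,x,u,z) = \intl_s^u \intl_X \ka_{n-m}(s,x,v,z_1) q(v,z_1) \ka_{m-1}(v,z_1,u,z)\, dz_1\, dv,
\end{equation*}
and substitute this expression into the integral defining $\ka_{n+1}$.

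Next I would apply Tonelli's theorem, which is legitimate since $\ka$, $q$, and the measures $dz$, $du$ are all nonnegative, to move the integrals over $v$ and $z_1$ to the outside. The inner integral
\begin{equation*}
\intl_v^t \intl_X \ka_{m-1}(v,z_1,u,z) q(u,z) \ka(u,z,t,y) \, dz\, du
\end{equation*}
(where the lower limit becomes $v$ because $\ka_{m-1}(v,z_1,u,z) = 0$ for $u \leq v$ is encoded by the range $v < u < t$ inherited from the two nested integrations over $s < v < u < t$) is then recognized, by definition, as $\ka_m(v,z_1,t,y)$. This yields the desired identity.

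The only potentially delicate point is bookkeeping the ranges of integration when applying Tonelli, specifically making sure that after swapping, $u$ ranges over $(v,t)$ rather than $(s,t)$. This follows because the combined region is $\{s < v < u < t\}$. Other than this routine care, the proof is a direct transcription of the argument for Lemma \ref{lem:knft}, with sets $A \in \cM$ replaced by points $y \in X$ and the measure $m(dz)$ made explicit.
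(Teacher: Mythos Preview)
Your proposal is correct and follows essentially the same approach as the paper's own proof: induction on $n$, the case $m=0$ being the definition, and for $m\ge 1$ expanding $\ka_n$ via the inductive hypothesis at index $m-1$, swapping the order of integration, and recognizing the inner integral as $\ka_m$. The only cosmetic difference is that you invoke Tonelli (appropriately, since all integrands are nonnegative) where the paper says ``by Fubini''.
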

\begin{proof}
The result was stated in \cite[Lemma~3]{MR2457489} under stronger conditions, so for the comfort of the reader we repeat the arguments of \cite{MR2457489}.

If $m=0$, then the equality (\ref{eq:pnm}) holds by the definition of $\ka_n$. In
particular, this proves our claim for $n=1$. If $n\geq 1$ is
such that (\ref{eq:pnm}) holds for all $m<n$, then for every 
$m=1,2,\ldots,n$, by Fubini we indeed obtain
\begin{eqnarray*}
&&  \ka_{n+1}(s,x,t,y)=\intl_s^t \intl_ X \ka_{n}(s,x,u,z) q(u,z) \ka(u,z,t,y) dzdu \\
&=& \intl_s^t \intl_ X \intl_s^u \intl_X \ka_{n-1-(m-1)}(s,x,v,z_1) q(v,z_1) \ka_{m-1}(v,z_1,u,z) dz_1 dv \\
&&\times q(u,z)\ka(u,z,t,y) dzdu\\
&=& \intl_s^t \intl_ X \ka_{n-m}(s,x,v,z_1) q(v,z_1) \\
&&\times \left( \intl_v^t \int_ X \ka_{m-1}(v,z_1,u,z) q(u,z) \ka(u,z,t,y) dzdu \right) dz_1 dv \\
&=& \intl_s^t \intl_ X \ka_{n-m}(s,x,v,z_1) q(v,z_1) \ka_{m}(v,z_1,t,y) dz_1 dv.
\end{eqnarray*}
\end{proof}
The  Schr\"odinger perturbation of $\ka$ by $q$ is defined as follows,
\begin{equation}\label{eq:tp}
\t{\ka}=  \sum_{n=o}^\infty \ka_n.
\end{equation}
We will assume that for all $s<t\in \bbR$ and $x,y\in X$,
\begin{equation}\label{eq:kk_1}
\intl_s^t\intl_X\ka(s,x,u,z)q(u,z)\ka(u,z,t,y)dzdu\leq [\eta +Q(s,t)]\ka(s,x,t,y),
\end{equation}
or $\ka_1(s,x,t,y)\leq \ka(s,x,t,y)[\eta+Q(s,t)]$.
This is a {\it fine} analogue of (\ref{a1}) and ({\ref{a3}).
The following is a {\it fine} version of Theorem~\ref{t1} and \ref{t2}.
We note that (\ref{eq:gpte2}, \ref{eq:metgKpt}, \ref{eq:gpte4}), but not (\ref{eq:gpte1}), were first proved in \cite{MR2507445} by involved combinatorics. 
\begin{theorem}\label{t3}
For all $n=1,2,\ldots$, $s<t$ and $x,y \in X$,
\begin{align}\label{eq:gpte1}
\ka _n(s,x,t,y)&\leq \ka _{n-1}(s,x,t,y)\left[\eta + \frac{Q(s,t)}{n}\right]\\
\label{eq:gpte2}
&\leq \ka (s,x,t,y)\prod_{k=1}^{n}\left[\eta + \frac{Q(s,t)}{k}\right].
\end{align}
If $0<\eta <1$, then for all $s<t$ and $x,y \in X$, 
\begin{equation}\label{eq:metgKpt}
\t{\ka} (s,x,t,y)\leq \ka (s,x,t,y){\left(\frac{1}{1-\eta}\right)}^{1+Q(s,t)/\eta}.
\end{equation}
If $\eta =0$, then for all $s<t$ and $x,y \in X$,
\begin{equation}\label{eq:gpte4}
\t{\ka}(s,x,t,y)\leq \ka (s,x,t,y)e^{Q(s,t)}.
\end{equation}
\end{theorem}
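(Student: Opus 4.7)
The plan is to copy the proof of Theorem~\ref{t1} verbatim but at the level of densities, using Lemma~\ref{lem:pnm} in place of Lemma~\ref{l1} and invoking Fubini (which is free of charge because every quantity involved is nonnegative). Namely, I would prove (\ref{eq:gpte1}) by induction on $n$, with $n=1$ being literally the hypothesis (\ref{eq:kk_1}). For the inductive step, I write $(n+1)\ka_{n+1}(s,x,t,y)$ as $n$ copies of the $m=0$ version of (\ref{eq:pnm}) plus one copy of the $m=1$ version of (\ref{eq:pnm}). To the first $n$ copies I apply the inductive hypothesis $\ka_n(s,x,u,z)\leq \ka_{n-1}(s,x,u,z)[\eta+Q(s,u)/n]$; to the remaining copy I apply (\ref{eq:kk_1}) in the form $\ka_1(u,z,t,y)\leq \ka(u,z,t,y)[\eta+Q(u,t)]$. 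The integrand then becomes
$$
\ka_{n-1}(s,x,u,z)\,q(u,z)\,\ka(u,z,t,y)\bigl[(n+1)\eta + Q(s,u)+Q(u,t)\bigr],
$$
and super-additivity (\ref{a2}) replaces $Q(s,u)+Q(u,t)$ by $Q(s,t)$, which is independent of $u$ and $z$ and can be pulled outside the double integral. The remaining integral is $\ka_n(s,x,t,y)$ by the $m=0$ case of Lemma~\ref{lem:pnm}, so
$$
(n+1)\ka_{n+1}(s,x,t,y)\leq \bigl[(n+1)\eta+Q(s,t)\bigr]\ka_n(s,x,t,y),
$$
which is (\ref{eq:gpte1}).

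Iterating (\ref{eq:gpte1}) immediately gives (\ref{eq:gpte2}). Summing (\ref{eq:gpte2}) over $n$ and using
$$
\prod_{k=1}^{n}\left[\eta + \frac{Q(s,t)}{k}\right] = \frac{\eta^n}{n!}\bigl(1+Q(s,t)/\eta\bigr)_n
$$
together with the Taylor series $(1-\eta)^{-a}=\sum_{n\geq 0}\eta^n(a)_n/n!$ applied at $a=1+Q(s,t)/\eta$ yields (\ref{eq:metgKpt}) for $0<\eta<1$. For $\eta=0$ the same sum reduces to $\sum_n Q(s,t)^n/n!=e^{Q(s,t)}$, giving (\ref{eq:gpte4}).

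The only step that requires any thought is the decomposition of $(n+1)\ka_{n+1}$ into $n$ copies of the $m=0$ splitting plus one copy of the $m=1$ splitting: this is precisely the trick that produces $Q(s,u)+Q(u,t)$ inside the integrand, so that super-additivity closes the induction. Once that observation is in place, the rest is mechanical and completely parallels the proofs of Theorems~\ref{t1} and~\ref{t2}.
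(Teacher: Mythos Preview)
Your proposal is correct and follows exactly the approach the paper intends: the paper's own proof of Theorem~\ref{t3} simply says to repeat the argument of Theorem~\ref{t1} using Lemma~\ref{lem:pnm} and (\ref{eq:kk_1}), and your write-up is precisely that argument spelled out at the level of densities. The decomposition of $(n+1)\ka_{n+1}$ into $n$ copies of the $m=0$ splitting plus one copy of the $m=1$ splitting, followed by the inductive hypothesis, (\ref{eq:kk_1}), super-additivity (\ref{a2}), and the Taylor/Pochhammer summation, matches the paper's proofs of Theorems~\ref{t1} and~\ref{t2} line for line.
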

\begin{proof} We proceed as in the proof of Theorem~\ref{t1}, 
using Lemma~\ref{lem:pnm} and (\ref{eq:kk_1}).
\end{proof}

\section{Discussion and Applications}\label{sec:ap}
The proofs of Theorem~\ref{t1}, ~\ref{t2} and \ref{t3} indicate that our estimates are rather tight.  The observation is supported by the exact formulas for 
Schr\"odinger perturbations of transition densities by Dirac measures (not directly manageable by the methods of the present paper), see \cite{2011-BHJ}.
 We like to note that the iterated integrals defining $\kK_n$, $\kk_n$ and $\ka_n$ exhibit similarity to the expectations of powers of the additive functional in Khasminski\rq{}s lemma (\cite{MR1329992}, \cite{MR1671973}), to Wiener chaoses and the multiple integrals in the theory of rough paths (\cite{MR2314753}). In fact, our results offer a far-reaching extension and strengthening of Khasminski\rq{}s lemma for transition kernels and densities. On a formal level, a unique feature of our estimates is the combinatorics triggered by $\eta$, $Q$ and the assumptions (\ref{a1}), (\ref{a3}), (\ref{eq:kk_1}). As we will see below, the presence of $\eta$ is quite convenient in applications, and $Q$ is often chosen linear.

In applications, we need to verify conditions (\ref{a1}), (\ref{a3}) or (\ref{eq:kk_1}).
\begin{example}
{\rm 
Let $k(s,x,t,dy)\geq 0$ be a (jointly measurable) {\it transition} kernel, so that the following Chapman-Kolmogorov identity holds for all $A\in \cM$, $x\in X$ and $s<u<t$,
\begin{equation}\label{eq:CK}
\int_X k(s,x,u,dz)k(u,z,t,A)=k(s,x,t,A).
\end{equation}
If $du$ is the linear Lebesgue measure and $\|q\|_\infty:=\sup |q(u,z)|<\infty$, then
$$
k_1(s,x,t,A)\leq \|q\|_\infty\, k(s,x,t,A)\int_s^t du.
$$
Theorem~\ref{t2}, $Q(s,t)=\|q\|_\infty (t-s)$ and $\eta=0$ yield the well-expected bound,
\begin{equation}\label{eq:tkbq}
\t{\kk}(s,x,t,dy)\leq \kk (s,x,t,dy)e^{\|q\|_\infty(t-s)}.
\end{equation}
By Theorem~\ref{t3}, an analogous pointwise version of (\ref{eq:tkbq}) also holds.
}
\end{example}

\begin{example}\label{ex:osrk}
{\rm 
If $X=\{x_0\}$ consists of only one point and $dz$ is the Dirac measure at $x_0$, then we can skip them from the notation.
For instance, let $0<\beta<1$, $s<t$, and 
$\ka(s,t)=\Gamma(\beta)^{-1}(t-s)^{\beta-1}$.
For the linear Lebesgue measure $du$, Borel function $u\mapsto q(u)\geq 0$ and $s<t$, 
\begin{align}
\ka_1(s,t)&=\frac{1}{\Gamma(\beta)^2}\int_s^t (u-s)^{\beta-1} q(u) (t-u)^{\beta-1}du\label{eq:defka1}\\
&\le \frac{\|q\|_\infty} {\Gamma(2\beta)}(t-s)^{2\beta-1} 
=\frac{\|q\|_\infty} {\Gamma(2\beta)}(t-s)^{\beta}\ka(s,t) \nonumber\\
&\le [\eta+c(t-s)]\ka(s,t),\label{eq:lb}
\end{align}
provided $\|q\|_\infty<\infty$. Here $\eta>0$ may be arbitrarily small, at the expense of $c<\infty$. We note that such affine upper bounds are an important special case of (\ref{eq:kk_1}), in particular (\ref{eq:lb}) allows for an application of Theorem~\ref{t3}.

We can handle some unbounded functions $q$, too. For $s<u<t$ we have
$$
(u-s)^{1-\beta}\vee(t-u)^{1-\beta}\geq \left[(t-s)/2\right]^{1-\beta},
$$
hence the following 3P Theorem holds for $\ka$,
$$
\ka(s,u)\wedge \ka(u,t)\le 2^{1-\beta}\ka(s,t).
$$
In consequence, $\ka(s,u)\ka(u,t)\le 2^{1-\beta}\ka(s,t)\left[
\ka(s,u)+\ka(u,t)\right]$. By (\ref{eq:defka1}),
\begin{equation}
\ka_1(s,t)\leq \ka(s,t)\frac{2^{1-\beta}}{\Gamma(\beta)}
\big[
\int_s^t (u-s)^{\beta-1}q(u)du+\int_s^t(t-u)^{\beta-1} q(u)du
\big].\label{eq:uKc}
\end{equation}
In particular, $q(u)=|u|^{-\beta+\varepsilon}$ with $0<\varepsilon\le\beta$, yields sufficient smallness of the integrands in (\ref{eq:uKc}), hence local comparability of $\ka$ and $\t \ka$, by Theorem~\ref{t3}. 
}
\end{example}
\begin{remark}
{\rm 
Let $\ka$ be a (forward) kernel density.
We will say that $q$ is of {\it relative Kato class} for $\ka$, if $\inf\{c:\int_s^t\int_X \ka(s,x,u,z)q(u,z)\ka(u,z,t,y)dzdu\leq 
c\ka(s,x,t,y) \mbox{ for all  $s<t<s+h$ and $x,y\in X$}\}\to 0$ as $h\to 0$.
In short,
$$\sup\{\ka_1(s,x,t,y)/\ka(s,x,t,y): s<t<s+h,\ x,y\in X\big\}\to 0 \mbox{ as }  h\to 0.$$
We say that $q$ is of {\it Kato class} for $\ka$, if 
$$\sup\big\{\int_s^t\int_X \left[\ka(s,x,u,z)+\ka(u,z,t,y)\right]q(u,z)dzdu\big\}\to 0 \mbox{ as } h\to 0,$$
where the supremum is taken over all $s<t<s+h$ and  $x,y\in X$. 
The conditions were used for Schr\"odinger perturbations of {transition densities}, for which the latter 
is usually weaker and easier to verify, see \cite{MR2457489}.
As indicated by Example~\ref{ex:osrk}, when $\ka$ satisfies the 3P Theorem, the  Kato condition implies the relative Kato condition.
Accordingly, the two  are equivalent for the transition density of the fractional Laplacian $\Delta^{\alpha/2}$ with $0<\alpha<2$, but not $\alpha=2$, because 3P fails for the Gaussian kernel. The details and
 further references are given in \cite{MR2457489}  for transition densities, see also \cite{2011-BHJ} for the special case of Schr\"odinger perturbations of the Cauchy transition density. 
}
\end{remark}

We will make a connection to Schr\"odinger operators analogous to $\Delta+q$, as aforementioned in Introduction. 
Consider a kernel $\kK$ on $E$, function $q\in \cE^+$ and real-valued $\cE$-measurable functions $\phi$ and $\psi$ on $E$ such that $\kK \psi=-\phi$. Here we assume absolute integrability: $\kK|\psi|<\infty$.
Then, 
\begin{align}
\t\kK (\psi+q\phi)&=(\kK+\t\kK q\kK)(\psi+q\phi)
=-\phi +\kK q \phi-\t \kK q \phi+\t \kK q \kK q \phi \nonumber\\
&=-\phi +\kK q \phi-\kK q \phi-\t \kK q\kK q \phi+\t \kK q \kK q \phi=-\phi,\label{eq:tkphi}
\end{align}
provided the integrals are absolutely convergent for all arguments.

For forward kernels we can give rather explicit sufficient conditions for the absolute integrability.
We will say $\kK$ is locally finite in time if for all real $s<t$, $u\in \bbR$ and $z\in X$, we have
$\kK 1_{(s,t)}(u,z)=\kK(u,z,(s,t)\times X)<\infty$.
\begin{lemma}\label{thm:li}
Consider a forward kernel $\kK$ locally finite in time. Let $q\in \cE^+$ satisfy {\rm (\ref{a1})} with $\eta<1$ and some superadditive function $Q$. Let $\psi$ and $\phi$ be real-valued $\cE$-measurable functions such that $\kK\psi=-\phi$, and $|\psi|\leq c 1_{(a,b)}$ for some $a,b,c\in \bbR$. Then $\t\kK (\psi+q\phi)=-\phi$.
\end{lemma}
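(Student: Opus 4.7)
The plan is to justify the telescoping identity (\ref{eq:tkphi}) for the specific class of $\psi$ at hand. The only obstacle is to show that every integral in that chain converges absolutely; once that is secured, the algebraic cancellation gives the conclusion immediately. The basic tool is the operator identity $\t{\kK}=\kK+\t{\kK}q\kK$, which follows directly from $\kK_n=(\kK q)^n\kK$ and the definition (\ref{eq:deftk}).

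First I would bound $\t{\kK}|\psi|$ pointwise. Because $|\psi|\leq c\,1_{(a,b)}$ and $\kK$ is forward, $\t{\kK}|\psi|(s,x)=0$ whenever $s\geq b$. For $s<b$, the super-additivity (\ref{a2}) of $Q$ implies that $t\mapsto Q(s,t)$ is non-decreasing, hence $Q(s,t)\leq Q(s,b)<\infty$ on $\{s<t<b\}$, and Theorem~\ref{t1}, namely (\ref{eq:metK}) (or (\ref{e4}) in the boundary case $\eta=0$), yields
\begin{equation*}
\t{\kK}|\psi|(s,x)\leq c\,(1-\eta)^{-1-Q(s,b)/\eta}\,\kK\bigl(s,x,(a,b)\times X\bigr),
\end{equation*}
which is finite by local finiteness of $\kK$ in time. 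Since $|\phi|=|\kK\psi|\leq \kK|\psi|$, it follows that
\begin{equation*}
\t{\kK}q|\phi|\leq \t{\kK}q\kK|\psi|=\sum_{n=1}^{\infty}\kK_n|\psi|\leq \t{\kK}|\psi|<\infty,
\end{equation*}
using that $\kK_n q\kK=\kK_{n+1}$.

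With these bounds, every integral appearing in (\ref{eq:tkphi}) is absolutely convergent, so $\t{\kK}$ and $\t{\kK}q$ act linearly on the signed functions $\psi$, $\phi$, $\psi+q\phi$ by splitting into positive and negative parts. The identity $\t{\kK}=\kK+\t{\kK}q\kK$ together with $\kK\psi=-\phi$ then gives
\begin{align*}
\t{\kK}(\psi+q\phi)&=\t{\kK}\psi+\t{\kK}q\phi=\kK\psi+\t{\kK}q\kK\psi+\t{\kK}q\phi\\
&=-\phi-\t{\kK}q\phi+\t{\kK}q\phi=-\phi,
\end{align*}
which is the conclusion. The main obstacle is the first pointwise bound on $\t{\kK}|\psi|$, where the super-additivity of $Q$, Theorem~\ref{t1}, and local finiteness of $\kK$ in time all come together; after that, the identity reduces to arithmetic.
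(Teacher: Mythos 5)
Your proof is correct and follows essentially the same route as the paper: both verify the absolute convergence needed for the telescoping identity (\ref{eq:tkphi}) by combining Theorem~\ref{t1}'s bounded-time comparability (via monotonicity of $Q$ from (\ref{a2})) with the local finiteness of $\kK$ in time, and then cancel using $\t\kK=\kK+\t\kK q\kK$. Your single bound on $\t\kK|\psi|$ neatly dominates all the terms $\kK q\kK|\psi|$, $\t\kK q\kK|\psi|$, $\t\kK q\kK q\kK|\psi|$ that the paper checks, so nothing is missing.
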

\begin{proof}
We have $|\phi|\leq \kK|\psi|<\infty$, by the local finiteness of $\kK$. By the preceding discussion it suffices to prove that $\kK q\kK|\psi|$, $\t \kK q\kK|\psi|$ and $\t \kK q \kK q \kK |\psi|$ are finite.
In {\it bounded time}, by our assumptions and Theorem~\ref{t1}, $\kK q\kK\leq C \kK$, $\t \kK\leq C \kK$, and $\kK q\kK q \kK\leq C \kK$, with some $C\in \bbR$, which ends the proof.
\end{proof}

As a rule, if $\kK$ is a left inverse of an operator $L$ on space-time, then  $\t\kK$
is a left inverse of $L +q$.
Namely, if
$$
\int_E \kK(s,x,dudz)L\phi(u,z)=-\phi(s,x), \quad (s,x)\in E,
$$
for some function $\phi$, 
then we consider $\psi=L\phi$, and  obtain
$$
\int_E \t\kK(s,x,dudz)\left[L\phi(u,z)+q(u,z)\phi(u,z)\right]=-\phi(s,x), \quad (s,x)\in E,
$$
under the assumptions of Lemma~\ref{thm:li}.
This is quite satisfactory if $L$ is local in time, because if $\phi$ is compactly supported in time, then so is $\psi$,  and the boundedness of $\psi$ may usually be secured by appropriate assumptions on $\phi$, see, e.g., \cite{MR2457489, 2011-KB-TJ-pa}.

If $L$ is nonlocal in time, then more flexible conditions on $\kK$ may be needed.
\begin{lemma}\label{thm:li2}
Consider a forward kernel $\kK$ such that $\kK^2$ is locally finite in time. Let $q\in \cE^+$ satisfy {\rm (\ref{a1})} with $\eta<1$ and some superadditive function $Q$. Let $\psi$ and $\phi$ be real-valued $\cE$-measurable functions such that $\kK\psi=-\phi$, and $|\psi|\leq c \kK 1_{(a,b)}$ for some $a,b,c\in \bbR$. Then $\t\kK (\psi+q\phi)=-\phi$.
\end{lemma}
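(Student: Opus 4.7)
The plan is to adapt the argument of Lemma~\ref{thm:li} to the weaker hypothesis $|\psi|\le c\kK 1_{(a,b)}$. As in that lemma, the telescoping manipulation in (\ref{eq:tkphi}) rests only on the identity $\t\kK = \kK + \t\kK q\kK$, which at the level of positive measures needs no further justification, so what we must verify is the absolute convergence of the integrals in (\ref{eq:tkphi}). By the preceding discussion this reduces to showing that $\kK q\kK|\psi|$, $\t\kK q\kK|\psi|$, and $\t\kK q\kK q\kK|\psi|$ are all finite.

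Set $\chi:=\kK 1_{(a,b)}$, so $|\psi|\le c\chi$ by hypothesis, and $|\phi|\le \kK|\psi|\le c\kK\chi = c\kK^2 1_{(a,b)}$, which is finite by the assumed local finiteness of $\kK^2$. Because $\kK$ is forward, $\chi$ vanishes at times $\ge b$, so for fixed $(s,x)$ the integrals $\kK q\kK\chi(s,x)$, $\t\kK q\kK\chi(s,x)$, and $\t\kK q\kK q\kK\chi(s,x)$ involve only times $t\in(s,b)$, on which $Q(s,t)\le Q(s,b)<\infty$ by super-additivity and non-negativity of $Q$. Theorem~\ref{t1} then yields $\kK_n(s,x,dtdy)\le C_n\kK(s,x,dtdy)$ and $\t\kK(s,x,dtdy)\le C\kK(s,x,dtdy)$ on $(s,b)\times X$ with finite constants; combined with the composition identities $\t\kK q\kK=\t\kK-\kK$ and $\t\kK q\kK q\kK=\t\kK-\kK-\kK_1$, valid as positive measures since $\t\kK=\sum_{n\ge 0}\kK_n$, each of the three quantities is dominated by a constant multiple of $\kK\chi = \kK^2 1_{(a,b)}$, and is therefore finite.

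The main obstacle is really just bookkeeping: in Lemma~\ref{thm:li} the bound $|\psi|\le c1_{(a,b)}$ made $\psi$ both pointwise bounded and compactly supported in time, whereas here $\psi$ is only dominated by the possibly unbounded $\kK 1_{(a,b)}$. The key observation that rescues the proof is that, for a forward kernel, $\kK 1_{(a,b)}$ still has support confined to $\{u<b\}$, so the ``bounded time'' shortcut of Lemma~\ref{thm:li} carries over verbatim, at the price of having to appeal to the stronger local finiteness of $\kK^2$ (rather than of $\kK$) for the final finiteness of $\kK^2 1_{(a,b)}$. Once the three iterated integrals are controlled, the telescoping computation (\ref{eq:tkphi}) delivers $\t\kK(\psi+q\phi)=-\phi$, as required.
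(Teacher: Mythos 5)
Your proof is correct and follows essentially the same route as the paper: reduce the claim to absolute convergence of $\kK q\kK|\psi|$, $\t\kK q\kK|\psi|$ and $\t\kK q\kK q\kK|\psi|$, use the Theorem~\ref{t1} bounds in bounded time (all relevant times lie below $b$ since $\kK 1_{(a,b)}$ vanishes there for a forward kernel), and conclude from $\kK^2 1_{(a,b)}<\infty$. Your use of the identities $\t\kK q\kK=\t\kK-\kK$ and $\t\kK q\kK q\kK=\t\kK-\kK-\kK_1$ is only a cosmetic variant of the paper's direct bounds $\kK q\kK\le C\kK$, $\t\kK\le C\kK$, $\kK q\kK q\kK\le C\kK$.
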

\begin{proof}
The absolute integrability required for (\ref{eq:tkphi}) amounts to the finiteness of
$|\phi|\leq \kK|\psi|$, $\kK q\kK |\psi|$, $\t \kK q\kK|\psi|$ and $\t \kK q \kK q \kK |\psi|$.
In {bounded time}, by Theorem~\ref{t1}, $\kK q\kK\leq C \kK$, $\t \kK\leq C \kK$, and $\kK q\kK q \kK\leq C \kK$, with a number $C$. The result follows, since $\kK^2 1_{(a,b)}<\infty$ for finite $a<b$.
\end{proof}

\begin{example}\rm 
We consider the Weyl fractional integral on the real line (\cite{1993-KM-BR}),
$$
W^{-\beta}\psi(s) = \frac{1}{\Gamma(\beta)} \int_s^\infty (u-s)^{\beta-1} \psi(u)\, du\,.
$$ 
Here $\beta\in (0,1)$, and we require absolute integrability. The kernel has the density $\kappa(s,u) = (u-s)^{\beta-1}/\Gamma(\beta)$ discussed in Example \ref{ex:osrk}. 
We also consider the Weyl fractional derivative,
$$
\partial^{\beta}\phi(s) =  \frac{1}{\Gamma(1-\beta)}\int_s^\infty (u-s)^{-\beta}\phi'(u)\,du\,.
$$
Here and in what follows $s\in \bbR$ and $\phi$ is a real-valued, continuously differentiable and compactly supported function on $\bbR$. 
By Fubini's theorem, 
\begin{align*}
W^{-\beta}\partial^{\beta} \phi(s) &= \frac{1}{\Gamma(\beta)\Gamma(1-\beta)} \int_s^\infty\int_u^\infty (u-s)^{\beta-1}(r-u)^{-\beta} \phi'(r)\, dr\, du \\
& = \frac{1}{\Gamma(\beta)\Gamma(1-\beta)} \int_s^\infty\int_s^r (u-s)^{\beta-1}(r-u)^{-\beta} \phi'(r)\, du\, dr \\
& = \int_s^\infty \phi'(r)\,  dr = -\phi(s),
\end{align*}
see, e.g., \cite{1993-KM-BR}. We intend to use Lemma \ref{thm:li2}.
Let $\psi=\partial^\beta \phi$. If $a,b\in \bbR$ and ${\rm supp}\ \phi \subset (a,b)$, then
$|\psi(s)|\leq(\Gamma(1-\beta))^{-1}\|\phi'\|_\infty\int_0^{b-a} u^{-\beta}du  $ for all $s\in \bbR$, and
$\psi(s)=0$ for $s>b$. Since $\int_a^b\phi'(u)du=0$, for $s<a$ we obtain
$$
\psi(s)=\frac{1}{\Gamma(1-\beta)}\int_a^b \left[(u-s)^{-\beta}-(a-s)^{-\beta}\right]\phi'(u)du,
$$
hence 
$|\psi(s)|\leq (\Gamma(1-\beta))^{-1}\beta(b-a)^2 (a-s)^{-\beta-1}\|\phi'\|_\infty$.
On the other hand, 
$$
W^{-\beta}1_{(a',b')}(s)\geq \frac{b'-a'}{\Gamma(\beta)}(b'-s)^{\beta-1},
$$
if $s<a'<b'<\infty$.
When multiplied by a constant, this majorizes $\psi$, provided $a'>b$.
Since $W^{-\beta}1_{(a',b')}$ is locally bounded, and $W^{-\beta}$ is locally finite, we see that $\left(W^{-\beta}\right)^2$ is locally finite.

We now consider $q\in \cE^+$ satisfying {\rm (\ref{eq:kk_1})} with $\eta<1$ and a superadditive function $Q$ (see  Example \ref{ex:osrk} for such $q$). By Lemma \ref{thm:li2} and the above discussion,
\begin{equation}
\int_s^\infty \tilde\kappa(s,u)\left[\partial^\beta \phi(u)+q(u)\phi(u)\right]\,du=-\phi(s),
\end{equation}
where, by Theorem \ref{t3}, 
\begin{equation}
\tilde{\kappa}(s,t) = \sum_{n=0}^\infty \kappa_n(s,t) \le \frac{1}{\Gamma(\beta)}\left(\frac{1}{1-\eta}\right)^{1+Q(s,t)/\eta}
(t-s)^{\beta-1},
\qquad s<t.
\end{equation}
It seems that our methods also apply to perturbations of the so called anomalous diffusions, which are driven by fractional time derivatives, see \cite{MR2568272, MR2782245, Meerschaert2011216}.

\end{example}

\noindent
{\bf Acknowledgments.} Part of the results were presented in 2011 at 
the workshop “Foundations of Stochastic Analysis” at Banff International Research Station, and at Stochastic Analysis Seminar at Oxford-Man Institute of Quantitative Finance. The first named author gratefully thanks for the invitations. We also thank Wolfhard Hansen and Karol Szczypkowski for useful comments. 

\bibliographystyle{abbrv}
\bibliography{BJS-bib}
\end{document}